\def\BibTeX{{\rm B\kern-.05em{\sc i\kern-.025em b}\kern-.08em
T\kern-.1667em\lower.7ex\hbox{E}\kern-.125emX}}
\def\R{\mathbb{R}}
\newtheorem{theorem}{Theorem}[section]
\newtheorem{lemma}[theorem]{Lemma}
\newtheorem{proposition}[theorem]{Proposition}
\newtheorem{definition}[theorem]{Definition}
\newtheorem{case[theorem]}{Case}
\numberwithin{equation}{section}
\newcommand{\Z}{\mathbb Z}
\newcommand{\N}{\mathbb N}
\newcommand{\cW}{\mathcal W}
\newcommand{\cB}{\mathcal B}
\newcommand{\cI}{\mathcal I}
\newcommand{\cT}{\mathcal T}
\newcommand{\cF}{\mathcal F}
\newcommand{\bL}{\mathbf L}
\newcommand{\bk}{\mathbf k}
\def\R{\mathbb{R}}
\begin{document}

\title{Wave packet systems and connections to spectral analysis  of limiting operators 

\thanks{A.~Israel was supported by the Air Force Office of Scientific Research, under award FA9550-19-1-0005.  A.~Mayeli was supported in part by AMS-Simons Research Enhancement Grant and the PSC-CUNY research grants.}
}

\author{\IEEEauthorblockN{Kevin Hughes}
\IEEEauthorblockA{\textit{Department of Mathematics} \\
\textit{Edinburgh Napier University}\\
Edinburgh, UK \\
khughes.math@gmail.com}
\and
\IEEEauthorblockN{Arie Israel}
\IEEEauthorblockA{\textit{Department of Mathematics} \\
\textit{University of Texas at Austin}\\
Austin, Texas, USA \\
arie@math.utexas.edu}
\and
\IEEEauthorblockN{Azita Mayeli}
\IEEEauthorblockA{\textit{Department of Mathematics} \\
\textit{City University of New York}\\
New York, USA \\
amayeli@gc.cuny.edu}
}

\maketitle

\begin{abstract} 
We discuss the design of ``wave packet systems'' that admit strong concentration properties in phase space. We make a connection between this problem and topics in signal processing related to the spectral behavior of spatial and frequency-limiting operators. The results have engineering applications in medical imaging, geophysics, and astronomy. 
\end{abstract}

\begin{IEEEkeywords}
wave packets,   limiting operators, eigenvalue distributions
\end{IEEEkeywords}

%\tableofcontents 
 
\section{Introduction}

We write $\widehat{f}(\xi) = \int_{\R^d} f(x) \exp(- i x\cdot\xi ) dx$ for the Fourier transform of $f \in L^2(\R^d)$.  
% We equip $L^2(\R^d)$ with norm $\| f \| = \left( \int_{\R^d} |f(x)|^2 dx \right)^{1/2}$ and inner product $\langle f,g \rangle = \int_{\R^d} f(x) \overline{g(x)} dx$. 
%
Given a measurable set $S \subset \R^d$, we denote $\mathcal{PW}(S)$ for the Paley-Wiener space of $S$, which consists of all $f \in L^2(\R^d)$ such that $\widehat{f}(\xi) = 0$ for a.e. $\xi \in \R^d \setminus S$. Functions in $\mathcal{PW}(S)$ have  Fourier transforms supported on $S$. Consequently, we refer to functions $f \in \mathcal{PW}(S)$ as $S$-bandlimited.

If $S$ is bounded, functions $f(x)$ in $\mathcal{PW}(S)$ depend analytically on $x$; hence, they cannot be compactly supported.  However, one can ask to what extent a bandlimited function $f\in  \mathcal{PW}(S)$ can be concentrated on a given bounded set $F \subset \R^d$. This question can be characterized by the behavior of the eigenvalues of a certain compact operator on $L^2(\R^d)$.

Given bounded measurable sets $F$ and $S$ in $\R^d$ of positive Lebesgue measure, let $P_F: L^2(\R^d) \rightarrow L^2(\R^d) $ be the orthogonal projection on the subspace $L^2(F)$ of all $L^2$ functions supported on $F$, and let $B_S: L^2(\R^d) \rightarrow L^2(\R^d)$ be the orthogonal projection on % the Paley Wiener 
%space 
$\mathcal{PW}(S)$. A \emph{limiting operator} is  given by alternating these projection operators as 
\begin{align}\label{SSLO}
\cT_{F,S} = B_S P_F B_S.
\end{align}
This operator is self-adjoint, positive definite,  and compact, and possesses a decreasing 
 sequence of eigenvalues $\{\lambda_k(F,S)\}$ in $(0,1)$  
 % $$1>\lambda_1(F,S) \geq  \lambda_2(F,S)\geq \cdots >0$$
satisfying $\lambda_k(F,S)\to 0$ as $k\to \infty$. 
 % \footnote{The operators $B_SP_FB_S$  are  $P_FB_SP_F$ have identical eigenvalues with the same multiplicity -- see \cite{israelmayeli2023acha}.}

The eigenfunctions of $\mathcal{T}_{F,S}$ form an orthonormal basis $\{\Psi_k = \Psi_k(F,S) : k \geq 1\}$ for \( \mathcal{PW}(S) \), and they satisfy a double orthogonality property -- specifically, they are orthogonal with respect to the \( L^2(\mathbb{R}^d) \) inner product, and also, their restrictions to $F$ are orthogonal with respect to the $L^2(F)$ inner product \cite{Bell4}. This makes them an important tool in many practical problems, e.g., in the development of stable extrapolation methods for bandlimited functions (see \cite{landau1985overview}).

The eigenfunctions associated with the largest eigenvalues of $\mathcal{T}_{F,S}$, along with any function in the subspace spanned by these eigenfunctions, are bandlimited and exhibit the optimal energy concentration on the domain \( F \). Indeed, by the variational characterization of eigenvalues, the subspace $V_k := \mathrm{Span}\{\Psi_1,\cdots,\Psi_k\} \subset \mathcal{PW}(S)$ and the eigenvalue $\lambda_k = \lambda_k(F,S)$ solve the following optimization problem:
$$ \lambda_k = \max_{V \subset \mathcal{PW}(S)} \min_{f \in V \setminus \{0\}} \left\{ \int_F |f(x)|^2 dx/\|f\|^2    :  \dim V = k \right\}.
$$

The eigenfunctions $\Psi_k$  and associated subspaces $V_k$ play a major role in applications such as imaging, where the shapes of $F$ and $S$ are dictated by various acquisition constraints \cite{Bell1,Bell2,Bell3,Bell4,Bell5}.  
% They also provide a method for constructing finite-dimensional subspaces of the Paley-Wiener space, where each function in that subspace is bandlimited and has high energy in the $L^2$-norm on the spatial domain. 
For practical reasons, such as in thresholding and denoising, one needs to determine the dimension of such subspaces.  Thus, counting the eigenvalues \(\lambda_k(F,S)\) near $1$ and those far from both $1$ and $0$ is essential.
Generally, the solution to this problem depends on the geometric properties of the space and frequency domains.

In one-dimension, the eigenfunctions of the limiting operator $\mathcal{T}_{I,J}$ for intervals $I,J \subset \R$ are known as prolate spheroidal wave  functions,
%, and have many practical applications. 
and the eigenvalues $\lambda_k(I,J)$ depend only on the product of the time-frequency bandwidth, $c = |I| \cdot |J|$. 
As such, we represent these eigenvalues as $\lambda_k(c)$.
In \cite{Bell1,Bell2,Bell3}, it was established that these eigenvalues satisfy a strong clustering property: 
For any $\epsilon \in (0,1/2)$, 
approximately $c/2\pi - O_\epsilon(\log(c))$ eigenvalues lie in $[1-\epsilon, 1)$, 
fewer than $O_\epsilon(\log(c))$ many eigenvalues lie in the ``plunge region'' $(\epsilon ,1 - \epsilon) $,
and the remaining eigenvalues in $(0,\epsilon]$ decay to $0$ exponentially fast. 
Furthermore, it is known that the sequence $\lambda_k(c)$ crosses $\lambda=1/2$ when $k$ differs from the floor or ceiling of $c/2\pi$ by at most $1$ \cite{Landau93}. Therefore, there is a phase transition at $k = c/2\pi$: For $k\lesssim c/2\pi$ the eigenvalues $\lambda_k(c)$ are exponentially close to $1$ (for $c$ large), and for $k \gtrsim c/2\pi$, $\lambda_k(c)$ is exponentially close to $0$. When $k \sim c/2\pi$, the eigenvalues have intermediate size between $0$ and $1$. Recent advancements have yielded sharp quantitative bounds, making these statements precise -- see \cite{israel15eigenvalue,osipov2013,davenport2021improved, Bonami21, Kulikov24}.

%, as we discuss below.
Let $Q = [0,1]^d$ be the unit hypercube, and let $S\subset \R^d$ be a convex domain. We suppose $S$ is \emph{coordinate-wise symmetric} -- that is,  $\tau_j(x) \in S$ for every $x \in S$ and any coordinate reflection $\tau_j : \R^d \rightarrow \R^d$, $1 \leq j \leq d$, $\tau_j(x_1,\cdots,x_j,\cdots, x_d) = (x_1,\cdots,-x_j,\cdots,x_d)$. 
% 
% satisfying a symmetry condition. We say that a convex set $S \subset \R^d$ is \emph{coordinate-wise symmetric}  
%  provided that
% \begin{equation}\label{eqn:coord_sym}
%  (x_1,\cdots,x_d) \in S, \; \sigma = (\sigma_1,\cdots,\sigma_d) \in \{\pm 1\}^d \implies (\sigma_1 x_1,\cdots, \sigma_d x_d) \in S.
% \end{equation}
We write $B(x,r) \subset \R^d$ for the Euclidean ball centered at $x \in \R^d$ of radius $r > 0$. We write $S(r) := \{ rx : x \in S\}$ for the $r$-dilate of $S$.

The following theorem appeared before in \cite{israelmayeli2023acha}. 

\begin{theorem}\label{mainthm:cube_convex}
Let $Q = [0,1]^d$, and  let $S \subset B(0,1)$ be a compact coordinate-wise symmetric convex set in $\R^d$. 

There exists a dimensional constant $C_d > 0$ (independent of $S$) such that, for any $\epsilon \in (0,1/2)$ and $r \geq 1$, 
\begin{align}\label{eig_clust:eqn1}
% &| \# \{ k : \lambda_k(Q,S(r)) > \epsilon\} - (2\pi)^{-d} \mu_d(S(r)) | \leq C_d E_d(\epsilon,r) ,\\
%\label{eig_clust:eqn2}
& \# \{ k : \lambda_k(Q,S(r)) \in (\epsilon, 1-\epsilon)\} \leq C_d E_d(\epsilon,r),
\end{align}
where $E_d(\epsilon,r) := \max \{ r^{d-1} \log(r/\epsilon)^{5/2}, \log(r/\epsilon)^{5d/2} \}$.
\end{theorem}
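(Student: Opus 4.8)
The plan is to convert the plunge count into a difference of two eigenvalue counting functions and then to exploit the product geometry of the cube together with the coordinate-wise symmetry of $S$ to reduce the estimate to one-dimensional prolate-type bounds; the two terms in $E_d(\epsilon,r)$ should emerge as a ``surface'' contribution from the faces of $Q$ and a ``corner'' contribution from tensorizing across all $d$ axes.

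Write $N(\alpha) := \#\{k : \lambda_k(Q,S(r)) > \alpha\}$, so that the quantity to be bounded is $N(\epsilon) - N(1-\epsilon)$. The decisive point is that a logarithmic (rather than polynomial) dependence on $1/\epsilon$ must come from the sharp spectral transition of $\cT_{Q,S(r)}$, i.e.\ from the near-exponential decay of $\lambda_k$ away from a transition window; this in turn reflects the analyticity of $S(r)$-bandlimited functions. I would capture this through a ``soft boundary layer'' of width $\delta_r \asymp r^{-1}\log(r/\epsilon)$: on the one hand, $N(1-\epsilon)$ is bounded below by the dimension of a space of $S(r)$-bandlimited functions essentially supported on the shrunken $(1-\delta_r)$-core of $Q$; on the other, $N(\epsilon)$ is bounded above by a space associated to the $\delta_r$-enlargement of $Q$. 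The difference is then governed by the $\delta_r$-neighborhood of $\partial Q$, whose relevant degrees of freedom number $\asymp r^{d}\,|\{x : \mathrm{dist}(x,\partial Q)<\delta_r\}| \asymp r^{d}\delta_r \asymp r^{d-1}\log(r/\epsilon)$, up to the extra logarithmic powers tracked below.

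To make this rigorous and to isolate the exponents I would use the factorization $P_Q = \bigotimes_{j=1}^d P_{[0,1]}^{(j)}$ and compare $\cT_{Q,S(r)}$ with operators assembled from the one-dimensional prolate operators $\cT_{[0,1],[-r,r]}$, whose spectral behavior is quoted in the Introduction (a plunge region of size $O_\epsilon(\log r)$, and $\sim r$ eigenvalues near $1$). Because $S$ is convex and coordinate-wise symmetric, $S(r)$ is trapped between axis-aligned product sets, and $\widehat{\mathbf 1_{S(r)}}$ obeys curvature-type decay uniformly over all such $S$; this lets me replace the non-product multiplier $B_{S(r)}$ by a product surrogate up to an error of controlled rank and norm. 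For the product model the eigenvalues factor over coordinates, and a product $\prod_j \lambda^{(j)}_{k_j}$ lands in $(\epsilon,1-\epsilon)$ only if either exactly one coordinate sits in an enlarged one-dimensional plunge while the others are near $1$ (the surface case, contributing $\lesssim r^{d-1}(\log(r/\epsilon))^{5/2}$ after summing the contributions of the $2d$ faces and inserting the sharpened one-dimensional plunge bound), or at least two coordinates are simultaneously in transition (the corner case, contributing $\lesssim (\log(r/\epsilon))^{5d/2}$ by multiplying $d$ one-dimensional plunge counts). Taking the maximum of the two regimes yields $C_d E_d(\epsilon,r)$.

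The main obstacle I expect is precisely the passage from the non-product convex body $S$ to the axis-aligned product model in a way that preserves the sharp powers of the logarithm: one must show that the error incurred by replacing $B_{S(r)}$ with its product surrogate contributes no more to the plunge than the surface term already does, uniformly over every coordinate-symmetric convex $S\subset B(0,1)$ -- this uniformity is what produces the dimensional constant $C_d$ independent of $S$. A secondary difficulty is the honest bookkeeping of the logarithmic exponents: the factor $(\log(r/\epsilon))^{5/2}$ per coordinate, rather than the heuristic single power $\log(r/\epsilon)$, arises from quantifying the soft-boundary-layer width together with the polynomial-in-$\log$ losses in the one-dimensional concentration estimates, and it is the clean propagation of these losses through the tensorization that requires the most care.
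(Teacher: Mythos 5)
Your reduction to a product model breaks at exactly the step you flag as the main obstacle, and the failure is quantitative, not technical. If you replace $B_{S(r)}$ by $B_{\Pi}$ for an axis-aligned product set $\Pi$, the error operator is (up to signs) $P_Q B_{\Delta} P_Q$ with $\Delta$ the symmetric difference $S(r)\,\triangle\,\Pi$, and its trace is $(2\pi)^{-d}|Q|\,|\Delta|$. For a Euclidean ball, \emph{every} axis-aligned product set sandwiched inside or outside misses a constant fraction of the volume, so $|\Delta| \gtrsim c_d r^d$: the perturbation has $\sim r^d$ eigenvalues of constant size and cannot be split into a piece of rank $O(r^{d-1}\log(r/\epsilon))$ plus a piece of small norm. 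Since the plunge count is not monotone under set inclusion (sandwiching only bounds $N_S(\epsilon)-N_S(1-\epsilon)$ by $N_{\Pi_{out}}(\epsilon)-N_{\Pi_{in}}(1-\epsilon)$, which picks up the $\sim r^d$ bulk discrepancy), there is no route from the product model back to general $S$ this way. The auxiliary claim that $\widehat{\mathbf 1_{S(r)}}$ obeys curvature-type decay ``uniformly over all such $S$'' is also false: coordinate-wise symmetric convex bodies include polytopes (e.g., the $\ell^1$ ball) with flat faces and no uniform curvature, and the theorem's constant $C_d$ must be uniform over precisely this class. Your tensor-factorization analysis of the product model itself (one coordinate in an enlarged plunge versus several in transition, with care that many near-$1$ factors can drift into $(\epsilon,1-\epsilon)$) is fine in spirit, but it only ever treats product $S$.

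The paper sidesteps the operator-level comparison entirely by tensorizing on the \emph{basis} side rather than the operator side: it builds a fixed orthonormal basis $\cB^d$ of $L^2(Q)$ from tensor products of a Coifman--Meyer local sine basis adapted to a Whitney decomposition of $(0,1)$ (Propositions \ref{basis:prop} and \ref{main:prop}), with Gevrey-$3/2$ cutoffs giving Fourier envelopes $\exp(-a|\xi|^{2/3})$ --- this is where the exponent $5/2$ originates, not from sharp one-dimensional prolate bounds. Each basis element is frequency-localized near an explicit point, and the partition $\cB^d = \cB^d_{low}\cup\cB^d_{hi}\cup\cB^d_{res}$ is by position of that localization relative to $\partial S(r)$; convexity of $S$ enters only through a lattice-point count of boxes meeting a logarithmically thickened boundary shell (giving $r^{d-1}\log(r/\epsilon)^{5/2}$, with $\log(r/\epsilon)^{5d/2}$ from the small Whitney boxes at the corners of $Q$), which is trivially uniform over all convex coordinate-wise symmetric $S$. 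The conversion from this three-way partition to the eigenvalue count is the variational Lemma \ref{conc_frame:lem} (via Lemma 3.1 of the cited work of Marceca et al.), applied after noting $\cT_{F,S}=B_S P_F B_S$ and $P_F B_S P_F$ are isospectral. If you want to salvage your outline, the fix is to import exactly this move: keep your one-dimensional building blocks, but use them to form a concentrated spatial-side basis and count boundary-crossing indices, rather than perturbing $B_{S(r)}$ toward a product operator.
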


In the following section, we introduce wave packet systems, generalizing the classical Gabor and wavelet systems, and discuss the concentration and packing properties for families of functions in  $L^2(\R^d)$. We then use wave packets to construct an orthonormal basis for $L^2([0,1]^d)$  with strong concentration properties. We conclude with the proof of Theorem \ref{mainthm:cube_convex}.

\section{Methodology and Main Results}
\subsection{Wave Packet Systems}

%%%%Discussion of signal decomposition and time-frequency analysis%%%%
A classical technique in signal analysis is the decomposition of signals into components which are localized in the physical (time) and frequency domains. For the analysis, one is interested to construct orthogonal systems (or more generally, frames) that admit strong concentration properties. Two commonly employed methods for signal analysis are the short-time Fourier transform (STFT) and wavelet transform. Related to these transforms are the associated notion of a Gabor system or wavelet system.
%%%%%%%%%%%%%%%%%%%%

%%%%Defn of Frame%%%%
\begin{definition}
A \emph{frame} for a closed subspace $H \subset L^2(\R^d)$ is a family of functions $\{\psi_\nu\}$ in $H$ satisfying the condition:
\[
A \| f \|^2 \leq \sum_\nu |\langle f, \psi_\nu \rangle|^2 \leq B\| f \|^2 \quad \forall f \in H.
\]
We refer to the constants $0 < A\leq B< \infty$ as the (lower and upper) \emph{frame bounds} for $\{\psi_\nu\}$. A \emph{unit-norm frame} also satisfies $\| \psi_\nu \| = 1$. 
\end{definition}

The frame condition asserts that $\{\psi_\nu\}$ is a redundant (overcomplete) spanning set for $H$.
%%%%%%%%%%%%%%%%%%%%

%%%%Defn of Gabor System%%%%
Let \( g \in L^2(\mathbb{R}^d) \) be a smooth window function, decaying rapidly at $\infty$, and let $\mathcal{S} = \mathcal{S}_1 \times \mathcal{S}_2 \subset \R^d \times \R^d$, $\mathcal{S}_1 = \{x_\nu\}_{\nu \in \mathcal{I}}$, $\mathcal{S}_2=\{\xi_\mu\}_{\mu \in \mathcal{J}}$ be a set of spatial translation and frequency modulation pairs. A \emph{Gabor system} generated by $g$ and $\mathcal{S}$ is given by applying the translations and modulations to the window function,
\begin{equation}\label{eq:GaborSystem}
\mathcal{G}(g,\mathcal{S}) := \Big\{ g_{\nu\mu}(x) = e^{ i(x - x_\nu)\cdot \xi_\mu} g(x - x_\nu) \Big\}_{\nu \in \mathcal{I}, \mu \in \mathcal{J}}.  
\end{equation} 
In the literature, Gabor systems are frequently studied in the special case where $\mathcal{S}_1$ and $\mathcal{S}_2$ are lattices in $\R^d$ \cite{grochenig2001foundations}.

If $\mathcal{S}$ is structured appropriately (often as a Cartesian product of sufficiently dense lattices), the system $\mathcal{G}(g,\mathcal{S})$ is complete in $L^2(\R^d)$, and even forms a frame. One should regard $g_{\nu\mu}$ as localized to the region $|x-x_\nu| < 1$ in physical domain and to the region $|\xi - \xi_\mu| < 1$ in the frequency domain. 
%(This can be made precise by enforcing decay conditions on the window function $g$ and its Fourier transform $\widehat{g}$.) 
Gabor systems are widely used in signal processing to perform localization of signals. Given a signal $f \in L^2(\R^d)$, the inner products $\{ \langle f, g_{\nu\mu} \rangle\}$ capture the local spatial and frequency content of the signal near the set of points $\{(x_\nu,\xi_\mu)\}$ in phase space $\R^d \times \R^d$. If the system $\{g_{\nu\mu}\}$ forms a frame in $L^2(\R^d)$, the signal $f$ can then be reconstructed from the inner products $\{\langle f,g_{\nu\mu}\rangle\}$. This recovery problem is solved using the synthesis operator, defined via a suitable dual frame. Therefore, Gabor systems provide a convenient tool for performing analog-to-digital conversion.
%%%%%%%%%%%%%%%%%%%%

Another example of localization arises in the wavelet representation of a function. Starting with a function $g$, often called a ``mother wavelet'', one defines a family given by applying a family of translations and dyadic dilations via
\begin{align}\label{dyadic-wavelet-system}
\mathcal{W}(g) := 
% \{w_{j,\vec{k}}(x) = 2^{-jd/2} g(2^{-j}x - \vec{k}) :j \in \Z, \vec{k} \in \Z^d \}
\big\{ w_{j,\vec{k}}(x) = 2^{-jd/2} g(2^{-j}x - \vec{k}) \big\}_{j \in \Z, \vec{k} \in \Z^d}.
\end{align}
Under suitable conditions on $g$, this system $\mathcal{W}(g)$ can be shown to be an orthonormal basis for a subspace of $L^2(\R^d)$. One should regard the function $w_{j,\vec{k}}$ as localized to a region $2^{-j-1} <|\xi| < 2^{-j}$ in the frequency domain, and to the region $|x-\vec{k} 2^{j}| < 2^j$ in the physical domain. Given a signal $f$, the inner products $\langle f, w_{j,\vec{k}})$ measure the content of $f$ within the corresponding region of the phase space $\R^d \times \R^d$.

We now introduce the notion of a \emph{wave packet system}, unifying the previous examples. We let $\Theta = \{\theta_\nu : \R^d \rightarrow [0,\infty) \}_{\nu \in \mathcal{I}}$ be a family of $C^\infty$ functions satisfying a uniform decay estimate, $|\theta_\nu(x)| \leq C_d/(1+|x|^d)$, with $C_d$ independent of $\nu$, and normalized so that $\| \theta_\nu \| = 1$. Fix a family of affine maps $\mathcal{A} = \{T_\nu : \R^n \rightarrow \R^n\}$, with $T_\nu(x) = A_\nu(x-x_\nu)$ for $x_\nu \in \R^n$ and $A_\nu \in GL_n(\R)$. Fix a family of frequency modulation vectors $\Xi=\{\xi_\mu \in \R^n\}_{\mu \in \mathcal{J}}$. Define a \emph{wave packet system} associated with these families as
\begin{equation}
\mathcal{P}(\Theta,\mathcal{A},\Xi) = \{ g_{\nu\mu}(x) = c_\nu e^{ i T_\nu(x) \cdot \xi_\mu}\theta_\nu(T_\nu(x))  \},
\end{equation}
with $c_\nu = |\det A_\nu|^{1/2}$ picked so that $\| g_{\nu\mu}\| = 1$. Often, it is convenient to let $\theta_\nu = \theta$ be independent of $\nu$. 
% When the affine maps \( T_\nu \) are simply translations, given by \( x \mapsto x - x_\nu \), the wave packet system becomes a special case of the Gabor system~\eqref{eq:GaborSystem}.
Notice that Gabor systems~\eqref{eq:GaborSystem}  are a special case of wave packets systems, where the maps $T_\nu$ are simply translations, $x \mapsto x - x_\nu$. 
Also, wavelet systems \eqref{dyadic-wavelet-system} are a special case of wave packet systems in which the affine maps $T_\nu$ specify a family of dyadic dilations and translations along dyadic lattice points, and there is no frequency modulation, $\Xi = \{0\}$. In general, a wave packet $g_{\nu \mu}$ should be regarded as localized to an ellipsoidal region $A_\nu^{-1}(B(0,1)) + x_\nu$ in physical space, and localized to the dual region $A_\nu^{T} (B(0,1)) + \xi_\mu$ in frequency space. 

%In the following section, we shall construct a family of functions $\{ \psi_\nu(x)\}$, inspired by the above (with one small tweak), involving the action of the affine group and frequency modulation, which forms an orthonormal basis for the Hilbert space $L^2([0,1]^d)$. Our construction is related to the Coifman-meyer local cosine basis \cite{CM}. We apply this basis to prove bounds on the number of eigenvalues of the limiting operator \eqref{SSLO} that are away from zero. This result is presented in Theorem~\ref{}.

We end the section by stating several problems related to the concentration properties of systems of functions in $\R^d$. For this discussion, we fix domains $F, S \subset \R^d$. We will view $F \times S$ as a domain in the phase space $\R^d \times \R^d$. Phase space will be parametrized by $(x,\xi) \in \R^d \times \R^d$. 
%The classical uncertainty principle in Fourier analysis states that any nonzero function $\psi \in L^2(\R^d)$ cannot be compactly supported while having a compactly supported Fourier transform. Further variants of the uncertainty principle assert that if a function $\psi \in L^2(\R^d)$ is limited to a region $B \subset \R^d$ of diameter $\Delta$ then its Fourier transform cannot be ``well-concentrated'' on a region of diameter at most $\Delta^{-1}$ -- this rough statement can be made precise in various ways, e.g. see \cite{}. 
We are interested in constructing a system of functions \( \{\psi_\nu\}_{\nu \in \cI} \) in \( L^2(\mathbb{R}^d) \) that are highly concentrated in the spatial and frequency domains \( F \) and \( S \) (although not necessarily perfectly supported on either). The following definition makes this precise.

\begin{definition}[$\epsilon$-concentrated system]
A finite 
 system of unit-norm functions $\{\psi_\nu\}$ in $L^2(\R^d)$ is $\epsilon$-concentrated on $F \times S$ provided
\[
\sum_\nu \| \psi_\nu \|_{L^2(\R^d \setminus F)}^2 + \sum_\nu \| \widehat{\psi_\nu}\|_{L^2(\R^d\setminus S)}^2 \leq \epsilon^2.
\]
\end{definition}

We note that the previous definition imposes no orthogonality constraints on \(\{\psi_\nu\}\).

\begin{definition}[$\epsilon$-packing]
Given compact sets $F,S \subset \R^d$, given $\epsilon > 0$, we say that  a finite family of unit-norm functions $\mathcal{F} = \{\psi_\nu\}_{\nu\in \cI}$ in $L^2(\R^d)$ is  an $\epsilon$-packing for $F \times S$ if
\begin{enumerate}
    \item $\{\psi_\nu\}$ is an  $\epsilon$-concentrated on $F \times S$. 
    \item $|\langle \psi_\nu, \psi_{\nu'} \rangle | < \epsilon$ for all $\nu,\nu'$, $\nu\neq \nu'$.
    % \item $\| \psi_\nu \| = 1$ for all $\nu$.
\end{enumerate}
\end{definition}

% \textbf{Packing problem:} Given compact sets $F,S \subset \R^d$, given numbers $\epsilon > 0$, and $\delta > 0$, we say that a family $\{\psi_\nu : \nu \in \mathcal{I}$ of $L^2(\R^d)$ functions is an $(\epsilon,\delta)$-packing for $F \times S$ provided that $|\langle \psi_\nu, \psi_{\nu'} \rangle | < \delta$ for all $\nu,\nu'$ \AM{disjoint?}, and the family $\{\psi_\nu\}$ is $\epsilon$-concentrated on $F \times S$. 
Condition 2 in the definition of a packing asserts that the family $\{\psi_\nu\}$ is approximately orthogonal.
Of course, every domain $F \times S$ admits the empty packing, 
 $\mathcal{F} = \emptyset$. In the previous definition, we would like to construct packings with large cardinality $\#\cF$. We refer to this as {\it the packing problem}. Ideally, we seek bounds on $\# \cF $  in terms of $\epsilon$ and the Lebesgue measures of $F$ and $S$.

The following lemma is essentially proven in \cite{Kulikov24} -- see the discussion in Section 4. It connects the packing problem to lower bounds on the eigenvalues of limiting operators. We provide the proof here to illustrate the variational technique.

% The packing problem is connected to lower bounds on the eigenvalues of limiting operators due to the following lemma.
\begin{lemma}\label{packing_spectral:lem}
Let $\cF = \{\psi_\nu\}$ be an $\epsilon$-packing  for $F \times S$, with $n := \#\cF$, and $\epsilon < \frac{1}{2n}$. Then $\lambda_{n}(P_F B_S P_F) > 1 - 5 \epsilon n^{1/2}$.
\end{lemma}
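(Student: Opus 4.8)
The plan is to bound $\lambda_n(P_F B_S P_F)$ from below through the Courant--Fischer min-max principle, using the packing $\cF$ itself to generate the competing subspace. Set $T := P_F B_S P_F$ and note that, since $P_F$ and $B_S$ are orthogonal projections, $\langle T f, f\rangle = \langle B_S P_F f, P_F f\rangle = \|B_S P_F f\|^2$. Thus it suffices to produce an $n$-dimensional subspace $V \subset L^2(\R^d)$ on which $\|B_S P_F f\|^2 > (1 - 5\epsilon n^{1/2})\|f\|^2$ for every nonzero $f \in V$; since the minimum of this Rayleigh quotient over the unit sphere of $V$ is attained, the min-max characterization $\lambda_n(T) = \max_{\dim W = n}\min_{0 \neq f \in W}\langle T f,f\rangle/\|f\|^2$ then forces $\lambda_n(T) > 1 - 5\epsilon n^{1/2}$. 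The natural choice is $V := \mathrm{Span}\{\psi_\nu\}$, and the argument reduces to checking that the two defining properties of an $\epsilon$-packing control the two quantities entering this estimate.

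First I would exploit the approximate orthogonality (condition (2)) to compare $f = \sum_\nu a_\nu \psi_\nu \in V$ with its coefficient vector $a = (a_\nu)$. Since $\|\psi_\nu\| = 1$ and $|\langle \psi_\nu, \psi_{\nu'}\rangle| < \epsilon$ for $\nu \neq \nu'$, the Gram matrix $G = (\langle \psi_{\nu'}, \psi_\nu\rangle)$ is a Hermitian perturbation of the identity whose off-diagonal row sums are below $(n-1)\epsilon$. As $\epsilon < \tfrac{1}{2n}$, Gershgorin's theorem gives $\lambda_{\min}(G) > 1 - (n-1)\epsilon > \tfrac12$. In particular $G$ is invertible, so the $\psi_\nu$ are linearly independent and $\dim V = n$, and moreover $\|f\|^2 = a^\ast G a \geq \lambda_{\min}(G)\,\|a\|_2^2 > \tfrac12\|a\|_2^2$, i.e. $\|a\|_2 < \sqrt2\,\|f\|$.

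Next I would use the concentration (condition (1)) to show each $f \in V$ is nearly fixed by $B_S P_F$. Writing $f - B_S P_F f = (I - B_S) f + B_S (I - P_F) f$ and using $\|B_S\| \leq 1$ gives $\|f - B_S P_F f\| \leq \|(I - P_F)f\| + \|(I - B_S) f\|$. Expanding $f$ in the $\psi_\nu$, applying the triangle inequality, and recalling $\|(I-P_F)\psi_\nu\| = \|\psi_\nu\|_{L^2(\R^d \setminus F)}$ together with $\|(I - B_S)\psi_\nu\| = \|\widehat{\psi_\nu}\|_{L^2(\R^d\setminus S)}$ (Plancherel), I obtain $\|f - B_S P_F f\| \leq \sqrt2\sum_\nu |a_\nu|\,e_\nu$, where $e_\nu := \big(\|\psi_\nu\|_{L^2(\R^d\setminus F)}^2 + \|\widehat{\psi_\nu}\|_{L^2(\R^d\setminus S)}^2\big)^{1/2}$. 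The $\epsilon$-concentration says $\sum_\nu e_\nu^2 \leq \epsilon^2$, so in particular $e_\nu \leq \epsilon$ for each $\nu$; hence $\|f - B_S P_F f\| \leq \sqrt2\,\epsilon\|a\|_1 \leq \sqrt2\,\epsilon\, n^{1/2}\|a\|_2 < 2\epsilon n^{1/2}\|f\|$, using $\|a\|_2 < \sqrt2\|f\|$. Since $2\epsilon n^{1/2} < 1$, it follows that $\|B_S P_F f\| > (1 - 2\epsilon n^{1/2})\|f\|$, and squaring gives $\langle T f, f\rangle = \|B_S P_F f\|^2 > (1 - 2\epsilon n^{1/2})^2\|f\|^2 > (1 - 5\epsilon n^{1/2})\|f\|^2$, which completes the min-max estimate.

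The step requiring the most care is the comparison $\|a\|_2 \lesssim \|f\|$: a priori a unit vector in $V$ could have coefficient vector of huge norm were the $\psi_\nu$ nearly dependent, and this would wreck the final estimate. Ruling it out is exactly the role of the packing hypothesis -- approximate orthogonality together with the quantitative smallness $\epsilon < \tfrac1{2n}$ -- which forces $\{\psi_\nu\}$ to be a Riesz system with constants near $1$; the concentration hypothesis, by contrast, enters only through the routine bound on $\|f - B_S P_F f\|$. I would also note that the factor $n^{1/2}$ arises solely from the crude estimate $\|a\|_1 \leq n^{1/2}\|a\|_2$: replacing it by the Cauchy--Schwarz bound $\sum_\nu |a_\nu|\,e_\nu \leq \|a\|_2\big(\sum_\nu e_\nu^2\big)^{1/2} \leq \epsilon\|a\|_2$ against the \emph{summed} concentration removes the $n^{1/2}$ altogether and yields the stronger $\lambda_n(P_F B_S P_F) > 1 - 4\epsilon$; the weaker stated form already suffices for the eigenvalue-counting applications.
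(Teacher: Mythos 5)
Your proof is correct and takes essentially the same route as the paper's: the span of the packing as the trial subspace in the Courant--Fischer max-min principle, a perturbation bound on the Gram matrix (you use Gershgorin where the paper bounds $\|I-G\|$ by the Frobenius norm) to get $\dim V = n$ and $\|a\|_2 \lesssim \|f\|$, and the triangle inequality with the concentration hypothesis to show $B_S P_F$ nearly fixes $V$. Your closing observation is also sound: the paper only invokes the per-element bounds $\|(Id-P_F)\psi_\nu\| \leq \epsilon$, $\|(Id-B_S)\psi_\nu\| \leq \epsilon$, so Cauchy--Schwarz against the summed concentration $\sum_\nu e_\nu^2 \leq \epsilon^2$ does yield the sharper, $n$-independent bound $1-4\epsilon$.
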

\begin{proof}
We write $\cF = \{\psi_1,\cdots,\psi_n\}$, and let $V \subset L^2(\R^d)$ be the subspace spanned by $\cF$. We claim that $\cF$ is a linearly independent set. Indeed, let $G$ be the $n \times n$ Gram matrix of $\cF$, i.e., $G_{\nu \nu'} = \langle \psi_\nu,\psi_{\nu'}\rangle$, $1 \leq \nu,\nu' \leq n$. The diagonal entries of $G$ are $1$, and the off-diagonal entries of $G$ have magnitude at most $\epsilon$. Let $I$ denote the $n \times n$ identity matrix. Observe that the Frobenius norm of $I - G$ is at most $\epsilon n < 1/2$. The spectral norm is bounded by the Frobenius norm, and so, $\| I - G \| < 1/2$, hence $G$ is invertible, and so $\cF$ is linearly independent. Thus, $\dim V = n$. 

By the variational characterization of eigenvalues (the max-min theorem), the bound $\lambda_n(F,S) > 1 - C \epsilon n^{1/2}$ is implied by
% \begin{align}
%  \|P_F B_S P_F \psi\|  > (1- C \epsilon n) \| \psi \| \quad \forall \psi \in V,  ~ \psi \neq 0.
% \end{align}
\[
\frac{\|P_F B_S P_F \psi\|}{\| \psi \|} > 1- C \epsilon n^{1/2} \quad \forall \psi \in V,  ~ \psi \neq 0.
\]

The claim that $\{\psi_\nu\}$ is $\epsilon$-concentrated on $F \times S$ implies that $\|(Id - P_F) \psi_\nu \| \leq \epsilon$ and $\|(Id - B_S) \psi_\nu \| \leq \epsilon$ for all $\nu$. Observe that $Id - P_F B_S P_F = (Id - P_F) + P_F(Id - B_S) + P_F B_S (Id - P_F)$. Since $P_F$ and $B_S$ are projections, by the triangle inequality we obtain $\|(Id - P_F B_S P_F) \psi_\nu \| \leq 3 \epsilon$.

Let $\psi \in V$, and write $\psi = \sum_\nu a_\nu \psi_\nu$. By the triangle inequality and the estimation in the previous paragraph, we have $\| (Id - P_FB_SP_F) \psi \| \leq 3 \epsilon \sum_{\nu} |a_\nu|$, which implies that
\begin{align*}
\| P_F B_S P_F \psi \| \geq \| \psi \| - 3 \epsilon \sum_\nu |a_\nu| \geq \| \psi\| - 3 \epsilon n^{1/2} 
 % \left(\sum_\nu |a_\nu|^2\right)^{1/2}.
 \sqrt{\sum_\nu |a_\nu|^2}.
\end{align*}
 
By expanding out the inner product, we write
\[
\| \psi \|^2 = \sum_{\nu} |a_\nu|^2  + \sum_{\nu\neq \mu} a_\nu \overline{a_\mu} \langle \psi_\nu, \psi_\mu \rangle \geq (1 - n\epsilon) \sum_\nu |a_\nu|^2,
\]
where in the last inequality we used $\left|\sum_{\nu \neq \mu} a_\nu \overline{a_\mu} \right| \leq (\sum |a_\nu| )^2 \leq n \sum_\nu |a_\nu|^2$, as well as the approximate orthogonality of $\cF$. Combining the previous estimates,
\[
\frac{\| P_F B_S P_F \psi \|}{\| \psi \|}  \geq 1 - 3 \epsilon n^{1/2}(1-n\epsilon)^{-1/2} \geq 1 - 5 \epsilon n^{1/2}.
\]
This estimate holds for any nonzero $\psi \in V$. This concludes the proof of the lemma.
\end{proof}

In dimension $d=1$, when $I,J \subset \R$ are intervals, \cite{Kulikov24} gives the construction of packings of $I \times J$ using the time-frequency translates of a family of Hermite polynomials multiplied by a Gaussian window. To state their result, fix $c:= |I| \cdot |J|$ to be sufficiently large. 
Given $\delta < 1$, there exists $\alpha(\delta) < 1$ and an $\epsilon$-packing $\cF$ of $I \times J$ such that $\epsilon = \alpha(\delta)^c$ and $\#\cF \geq (1-\delta) c/2 \pi$. 
%It would be very interesting to see whether a similar result holds for a wave packet system $\cF$, consisting of time-frequency translations of the Gaussian window.

As noted in \cite{Kulikov24}, these results and Lemma \ref{packing_spectral:lem} imply that for any $\delta > 0$,  there exists $\alpha = \alpha(\delta) < 1$, so that, for $c = |I| \cdot |J| $,
\[
\lambda_n(\mathcal{T}_{I,J}) = \lambda_n(P_I B_J P_I) \geq 1 - 5 \alpha^c c^{1/2} \quad \forall \; n \leq (1-\delta) c/2\pi.
\]
This gives the sharpest known lower bounds on the eigenvalues of time-frequency limiting operators in one-dimension, before the ``plunge region'' (i.e., for $n < c/2\pi$).

\textbf{Open Problem 1:} Let $I,J \subset \R$ be intervals with \(c:=|I|\cdot|J|\) sufficiently large. Given $\delta <1$, does there exist a wave packet system $\cF$ in $L^2(\R)$, given as a family of time-frequency translates of a \emph{single} smooth window function,  that is an $\epsilon$-packing for $I \times J$ with $\# \cF \geq (1-\delta) c/2\pi$ and $\epsilon = \alpha^c$ for $\alpha = \alpha(\delta) < 1$? Further, can this result be extended to higher dimensions, for the packing of a Cartesian product of Euclidean balls?

The next lemma is the key tool used in the proof of Theorem \ref{mainthm:cube_convex}. It connects the existence of a concentrated system with a bound on the set of the eigenvalues of a limiting operator.

\begin{lemma}\label{conc_frame:lem} Suppose there exists a unit-norm frame 
$\mathcal{F} = \{\psi_\nu\}$ 
% $\mathcal{F} = \{\psi_\nu : \nu \in \cI\}$ 
 for $L^2(F)$ with frame constants $0<A \leq B < \infty$, that admits a partition into three subfamilies, $\mathcal{F} = \mathcal{F}_{low} \cup \mathcal{F}_{hi} \cup \mathcal{F}_{res}$, such that $\mathcal{F}_{low}$ is $\sqrt{A/4}\cdot\epsilon$-concentrated on $F \times S$, $\mathcal{F}_{hi}$ is $\sqrt{A/4}\cdot\epsilon$-concentrated on $F \times (\R^d \setminus S)$, and   the family $\cF_{res}$ is finite. 
Then the set of eigenvalues of the operator $P_F B_S P_F$ satisfies the following spectral bound: $$\# \{ \lambda_k(P_F B_S P_F) \in (\epsilon,1-\epsilon) \} \leq (2/A) \# \mathcal{F}_{res}. $$
\end{lemma}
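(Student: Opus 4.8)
The plan is to identify the count on the left-hand side with the dimension of a spectral subspace of $\mathcal{T} := P_F B_S P_F$ and to bound that dimension using the frame lower bound $A$ together with the two concentration hypotheses. Write $N := \#\{k : \lambda_k(\mathcal{T}) \in (\epsilon, 1-\epsilon)\}$, let $H$ be the spectral subspace spanned by the eigenfunctions whose eigenvalue lies in $(\epsilon,1-\epsilon)$, and let $\Pi$ be the orthogonal projection onto $H$, so that $N = \tr \Pi$. Two structural facts drive the argument: every eigenfunction of $\mathcal{T}$ with nonzero eigenvalue lies in the range of $P_F$, so $H \subset L^2(F)$; and since $\mathcal{F}$ is a frame for $L^2(F)$, each $\psi_\nu$ satisfies $P_F\psi_\nu = \psi_\nu$. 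The latter makes the spatial part of each concentration condition automatic and reduces everything to the frequency projection $B_S$.

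First I would record the frame trace inequality. For any positive semidefinite operator $T$ on $L^2(F)$, diagonalizing $T = \sum_m \mu_m \langle \cdot, e_m\rangle e_m$ and applying the frame condition to each unit vector $e_m$ gives $A\,\tr T \le \sum_\nu \langle T\psi_\nu,\psi_\nu\rangle \le B\,\tr T$. Applying the lower bound with $T = \Pi$, and using $\langle \Pi\psi_\nu,\psi_\nu\rangle = \|\Pi\psi_\nu\|^2$, yields $A N \le \sum_\nu \|\Pi\psi_\nu\|^2$. It then suffices to show the right-hand sum is at most $\tfrac{A}{2} + \#\mathcal{F}_{res}$.

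The crux is a spectral estimate converting the plunge-region gap into a bound on $\|\Pi\psi_\nu\|$. On $H$ both $\mathcal{T}$ and $Id - \mathcal{T}$ are bounded below by $\epsilon$, so expanding any $\psi$ in the eigenbasis of $\mathcal{T}$ gives $\|\Pi\psi\| \le \tfrac1\epsilon\,\min\{\|\mathcal{T}\psi\|,\ \|(Id-\mathcal{T})\psi\|\}$. For $\psi \in \mathcal{F}_{low}$ I use the second term: since $P_F\psi = \psi$, one has $(Id - \mathcal{T})\psi = P_F(Id - B_S)\psi$, whence $\|(Id-\mathcal{T})\psi\| \le \|(Id-B_S)\psi\| = \|\widehat{\psi}\|_{L^2(\R^d\setminus S)}$, exactly as in the decomposition of Lemma \ref{packing_spectral:lem}. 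Summing and invoking the $\sqrt{A/4}\,\epsilon$-concentration of $\mathcal{F}_{low}$ on $F\times S$ gives $\sum_{\mathcal{F}_{low}}\|\Pi\psi\|^2 \le \tfrac1{\epsilon^2}\sum_{\mathcal{F}_{low}}\|\widehat\psi\|_{L^2(\R^d\setminus S)}^2 \le \tfrac1{\epsilon^2}\cdot\tfrac{A}{4}\epsilon^2 = \tfrac{A}{4}$. Symmetrically, for $\psi \in \mathcal{F}_{hi}$ I use the first term with $\|\mathcal{T}\psi\| \le \|B_S\psi\| = \|\widehat\psi\|_{L^2(S)}$ and the $\sqrt{A/4}\,\epsilon$-concentration on $F\times(\R^d\setminus S)$ to obtain $\sum_{\mathcal{F}_{hi}}\|\Pi\psi\|^2 \le \tfrac{A}{4}$. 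The residual terms are handled trivially: $\sum_{\mathcal{F}_{res}}\|\Pi\psi\|^2 \le \sum_{\mathcal{F}_{res}}\|\psi\|^2 = \#\mathcal{F}_{res}$.

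Combining the three estimates gives $A N \le \tfrac{A}{2} + \#\mathcal{F}_{res}$, i.e. $N \le \tfrac12 + \tfrac1A \#\mathcal{F}_{res}$. The final step exploits that $N$ is an integer: if $\#\mathcal{F}_{res} \ge A/2$ the inequality already gives $N \le \tfrac2A\#\mathcal{F}_{res}$, while if $\#\mathcal{F}_{res} < A/2$ then $N < 1$ forces $N = 0 \le \tfrac2A\#\mathcal{F}_{res}$; either way the claim follows. The main obstacle, beyond assembling these pieces, is the spectral estimate $\|\Pi\psi\| \le \epsilon^{-1}\min\{\|\mathcal{T}\psi\|,\|(Id-\mathcal{T})\psi\|\}$ — this is precisely where the separation of the plunge region from $0$ and $1$ enters — together with the care needed to verify $H\subset L^2(F)$ so the frame trace inequality applies; the constant $\sqrt{A/4}$ in the hypotheses is evidently calibrated so that the two concentrated families each contribute at most $A/4$.
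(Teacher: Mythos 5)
Your proof is correct, and it is more self-contained than the paper's. The first half of your argument --- using $P_F\psi_\nu = \psi_\nu$ to write $(Id - P_FB_SP_F)\psi = P_F(Id - B_S)\psi$, bounding $\|P_FB_SP_F\psi\| \leq \|B_S\psi\|$, and summing the two concentration hypotheses to get $\sum_{\psi\in\cF_{low}}\|(Id-\mathcal{T})\psi\|^2 + \sum_{\psi\in\cF_{hi}}\|\mathcal{T}\psi\|^2 \leq \tfrac{A}{2}\epsilon^2$ --- is exactly the paper's proof. At that point the paper stops and invokes Lemma 3.1 of \cite{marceca2023} as a black box, whereas you reprove that external ingredient: the frame trace inequality $A\,\tr T \leq \sum_\nu \langle T\psi_\nu,\psi_\nu\rangle$ applied to the spectral projection $\Pi$ (legitimate because every eigenfunction of $\mathcal{T}$ with nonzero eigenvalue lies in $L^2(F)$, so the frame lower bound applies to an orthonormal basis of $H$), the gap estimate $\|\Pi\psi\| \leq \epsilon^{-1}\min\{\|\mathcal{T}\psi\|,\ \|(Id-\mathcal{T})\psi\|\}$, and the integer-rounding step converting $N \leq \tfrac{1}{2} + \tfrac{1}{A}\#\cF_{res}$ into the stated $N \leq \tfrac{2}{A}\#\cF_{res}$ (your case analysis on whether $\#\cF_{res} \geq A/2$ or $< A/2$, the latter forcing $N = 0$, is airtight and holds for all values of $A$). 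What the paper's route buys is brevity; what yours buys is transparency: it makes visible exactly where the calibration $\sqrt{A/4}$ in the hypotheses enters (each concentrated family spends an $A/4$ budget in the trace bound, totaling the $A/2$ that produces the half-integer slack), why the final constant is $2/A$, and how the argument generalizes the classical orthonormal-system version (Lemma 1 of \cite{israel15eigenvalue}) to frames. One cosmetic point, which the paper shares: with the non-unitary Fourier normalization used here, $\|(Id-B_S)\psi\| = (2\pi)^{-d/2}\|\widehat{\psi}\|_{L^2(\R^d\setminus S)}$, so the identification you (and the paper) make is an inequality in the harmless direction and nothing breaks.
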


This lemma is proven using the variational principle, and variants of this lemma appeared previously in the literature -- see Lemma 1 of \cite{israel15eigenvalue} for a related result when $\cF$ is an orthonormal system. This result, for general frames, is implied by Lemma 3.1 in \cite{marceca2023} (we present the reduction below). 

\begin{proof}[Proof of Lemma \ref{conc_frame:lem}]
Since $\cF \subset L^2(F)$ we have $P_F \psi = \psi$ for all $\psi \in \cF$. Since $P_F$ is a projection, it follows that
\[
\| (Id - P_F B_S P_F) \psi \| = \| P_F(Id - B_S) \psi \| \leq \|(Id - B_S) \psi \|
\]
and $\|P_F B_S P_F \psi \| \leq \| B_S \psi \|$.
Therefore, by Plancherel's theorem, and the concentration properties of $\cF_{low}$ and $\cF_{hi}$, 
\begin{align*}
&\sum_{\psi \in \cF_{low}} \| (Id -P_FB_SP_F) \psi \|^2 + \sum_{\psi \in \cF_{hi}} \| P_FB_SP_F \psi \|^2 \\
& \leq \sum_{\psi \in \cF_{low}} \| \widehat{\psi} \|_{L^2(\R^d \setminus S)}^2 + \sum_{\psi \in \cF_{hi}} \| \widehat{\psi} \|_{L^2( S)}^2 \leq \frac{A}{2} \epsilon^2.
\end{align*}
The result follows then by Lemma 3.1 of \cite{marceca2023}.
\end{proof}

\subsection{Concentrated wave packet systems on the unit cube.}

\subsubsection{Coifman-Meyer local sine basis}

We introduce an orthonormal basis for $L^2([0,1])$, consisting of compactly supported functions whose Fourier transforms have rapidly decaying tails. Here, we will use a variant of the \emph{local sine basis} of Coifman-Meyer \cite{CM}.
 
% We use $\omega$ to denote the frequency variable in $\R$, and let $\mathcal{F}(f)(\omega)  = \widehat{f}(\omega) = \int_\R e^{- i x \omega} f(x) dx$ be the Fourier transform of a function $f \in L^2(\R)$.

Define $\cW$ to be a realization of the \emph{Whitney decomposition} of $(0,1)$. That is, the family $\cW$ consists of intervals, forming a partition of $(0,1)$, given by
\[
\cW =  \{ [ 2^{-j-1}, 2^{-j} ) \}_{j \geq 1} \cup  \{ [ 1 - 2^{-j}, 1 - 2^{-j-1})\}_{j \geq 1}.
\]
Let $\delta_L \in (0,1)$ denote the length of an interval $L \in \cW$. 
%For $\delta > 0$,
%\begin{equation}\label{good_geom1:eqn}
%\delta_L \leq \delta \implies L \subset [0,2 \delta] \cup [ 1 - 2 \delta,1]. 
%\end{equation}
% 

For $a>0$, define the ``envelope  function'' $\Psi_a(\xi) := \exp (- a \cdot \lvert \xi \rvert^{2/3})$. The main result of the section follows: 

\begin{proposition}\label{basis:prop}
There exists an orthonormal basis $\cB^1 = \{ \phi_{L,k} \}_{(L,k) \in \cW \times \N}$ for $L^2([0,1])$, with each $\phi_{L,k}$  a  $C^\infty$ function supported on a neighborhood of $L$, satisfying the Fourier decay condition: for all $\xi \in \R$, and some constants $C,a > 0$,
\begin{equation}
\label{Fourier_decay_CM:eqn}
\lvert \widehat{\phi_{L,k}} (\xi) \rvert \leq C \sqrt{\delta_L}  \sum_{\sigma \in \{\pm 1\}} \Psi_a\left( \delta_L  \xi -  \sigma  \pi (k+ 1/2) \right).
\end{equation}
\end{proposition}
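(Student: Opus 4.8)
The plan is to realize $\cB^1$ as a Coifman--Meyer local sine basis adapted to the Whitney partition $\cW$, assembled from smooth ``bell'' windows whose Fourier transforms decay subexponentially. The rate $\exp(-a|\xi|^{2/3})$ is precisely the decay attainable by a compactly supported window of Gevrey class $3/2$ (and nothing faster, short of analyticity, which is incompatible with compact support), so the construction must begin by fixing such a window. First I would fix a single transition profile $\rho \in C^\infty(\R)$ with $\rho \equiv 0$ on $(-\infty,-1]$ and $\rho \equiv 1$ on $[1,\infty)$, satisfying the folding identity $\rho(t)^2 + \rho(-t)^2 = 1$ on the overlap, and whose derivatives obey Gevrey-$3/2$ bounds $\|\rho^{(m)}\|_\infty \leq C R^m (m!)^{3/2}$. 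Such $\rho$ exists by the standard non-quasianalytic (Denjoy--Carleman) bump constructions.

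For each Whitney interval $L = [a_L,b_L] \in \cW$ of length $\delta_L$, I would form a bell $b_L$ supported on a neighborhood of $L$, rising like $\rho$ near the left endpoint and falling like $\rho(-\,\cdot\,)$ near the right, with the overlap lengths chosen as a fixed fraction of the smaller of the two adjacent Whitney lengths, so that the rising part of $b_L$ matches, under reflection, the falling part of the bell on the left-neighboring interval. The basis functions are then the standard local sines
\[
\phi_{L,k}(x) = b_L(x)\,\sqrt{2/\delta_L}\,\sin\!\big(\pi (k+\tfrac12)(x - a_L)/\delta_L\big), \qquad k \in \N.
\]

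Orthonormality and completeness then follow from the classical Coifman--Meyer folding argument: the half-integer frequencies $(k+\tfrac12)$ endow the sines with definite parity about each endpoint, and the identity $\rho(t)^2 + \rho(-t)^2 = 1$ forces the cross terms between a bell and its neighbor to cancel, so $\{\phi_{L,k}\}$ is orthonormal; completeness in $L^2([0,1])$ holds because the bells cover $(0,1)$ and the per-interval sine systems are complete after folding. The only delicate point is the accumulation of infinitely many Whitney intervals at the endpoints $0$ and $1$, where the bells shrink to zero support; I would dispose of this by the standard argument that the tail of the system still spans $L^2$ near the endpoints, since the supports exhaust $(0,1)$.

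The main work --- and the main obstacle --- is the Fourier decay estimate \eqref{Fourier_decay_CM:eqn}. Writing $\sin$ as a difference of exponentials and changing variables $x = a_L + \delta_L y$ reduces $\widehat{\phi_{L,k}}(\xi)$, up to unimodular phases, to
\[
\sqrt{2\delta_L}\cdot\tfrac{1}{2i}\Big[\widehat{\beta_L}\big(\delta_L \xi - \pi(k+\tfrac12)\big) - \widehat{\beta_L}\big(\delta_L \xi + \pi(k+\tfrac12)\big)\Big],
\]
where $\beta_L(y) = b_L(a_L + \delta_L y)$ is the rescaled bell --- a smooth function supported in a fixed-size neighborhood of $[0,1]$ with Gevrey-$3/2$ derivative bounds \emph{uniform in $L$} (this uniformity is the payoff of building every $b_L$ from the single profile $\rho$ with overlaps pinned to a fixed fraction of $\delta_L$). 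It then remains to prove $|\widehat{\beta_L}(\eta)| \leq C\exp(-a|\eta|^{2/3})$ uniformly in $L$, which is the classical Fourier estimate for compactly supported Gevrey-$3/2$ functions: from the integration-by-parts bound $|\widehat{\beta_L}(\eta)| \leq |\eta|^{-m}\|\beta_L^{(m)}\|_{L^1} \leq C (CR)^m (m!)^{3/2} |\eta|^{-m}$, optimizing over $m$ via Stirling yields the stretched-exponential rate with exponent $1/s = 2/3$. Substituting this back reproduces exactly the two-term envelope bound \eqref{Fourier_decay_CM:eqn}, with the $\sqrt{\delta_L}$ prefactor emerging from the change of variables. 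The care required is entirely in verifying that the Gevrey constants do not degrade as $\delta_L \to 0$ across the dyadic scales of $\cW$, which is guaranteed by the scale-invariant choice of overlaps.
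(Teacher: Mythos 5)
Your proposal follows essentially the same route as the paper's (sketched) proof: a Coifman--Meyer local sine basis adapted to the Whitney partition, with Gevrey-class $G^{3/2}$ cutoff functions yielding the subexponential Fourier decay $\exp(-a|\xi|^{2/3})$ via the standard optimization of the integration-by-parts bound over the derivative order. Your write-up correctly fills in the details the paper defers to \cite{israelmayeli2023acha} --- the folding/parity compatibility at shared endpoints, the scale-invariant choice of overlaps giving Gevrey constants uniform in $L$, and the rescaling argument producing the $\sqrt{\delta_L}$ prefactor --- so there is no gap to report.
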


% Due to the rapid decay  of $\Psi_a(\xi)$ as $\xi \rightarrow \infty$, the  inequality \eqref{Fourier_decay_CM:eqn} can be interpreted to say that the  $L^2$-energy of $\widehat{\phi_{L,k}}(\xi)$ is sharply concentrated near the two frequencies, where $\xi \sim \pm \pi (k+1/2)/ \delta_L$. \AM{is this detail required? it is taking some space :)}

We sketch the proof of Proposition \ref{basis:prop} -- for details, see \cite{israelmayeli2023acha}.
The system $\{\phi_{L,k}\}$ is a variant of the local sine basis for $L^2([0,1])$ (see \cite{CM,Weiss93}).  
We select cutoff functions $\theta_L$, with each $\theta_L$ supported on a neighborhood of the interval $L \in \cW$. We define a 
family, indexed by $L \in \cW$ and $k \in \N$,
\[
\phi_{L,k}(x) = c_L \theta_L(x) \sin(\pi(k+1/2)(x-x_L)/\delta_L),
\]
with $x_L$ the left endpoint of $L$, and $c_L$ a normalization constant picked so that $\| \phi_{L,k} \|  = 1$.
Coifman and  Meyer showed that the resulting family is an orthonormal basis for $L^2([0,1])$. The support condition in Proposition \ref{basis:prop} follows by the support condition for the cutoffs $\theta_L$. 
If the cutoff functions are suitably regular -- specifically, if they belong to the Gevrey class $G^{3/2}$ --  then the decay condition \eqref{Fourier_decay_CM:eqn} will be satisfied. This concludes our sketch of the proof of Proposition \ref{basis:prop}.

Observe that the $\phi_{L,k}$ do not technically form a wave packet system, but rather   a weighted combination of two wave packets with modulation frequencies of opposite signs. 
This follows from the identity $\sin(t) = (-i/2)(e^{it} - e^{-it})$.

\subsubsection{Tensor systems}
%

%\begin{figure}[ht]
%    \centering  \includegraphics[width=0.25\textwidth]{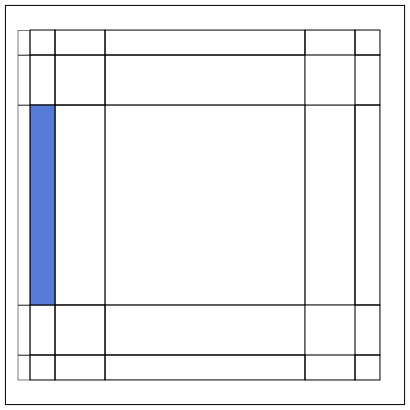}
%    \caption{A decomposition of the square into rectangles $\bL$ in dimension $d=2$.}
%    \label{fig:cube-decomposition}
%\end{figure}

We construct an orthonormal basis for $L^2([0,1]^d)$. 
Let $\cW^{\otimes d} = \{ \bL = L_1 \times \cdots \times L_d : L_1,\cdots,L_d \in \cW\}$ be the set of all $d$-{\it fold Cartesian products} of intervals in $\cW$. Elements $\bL$ of $\cW^{\otimes d}$ are rectangular boxes in $[0,1]^d$. See Figure \ref{fig:thinrect} for a depiction of the family $\cW^{\otimes d}$.

For $\bL = L_1 \times \cdots \times L_d \in \cW^{\otimes d}$ and $\bk = (k_1,\cdots,k_d) \in \N^d$, we define $\psi_{(\bL,\bk)}$ in $L^2(\R^d)$ by
\[
\psi_{(\bL,\bk)}(x_1,\cdots,x_d) = \phi_{L_1, k_1}(x_1)\cdots\phi_{L_d,k_d}(x_d),
\]
where $\cB^1 = \{\phi_{L, k}\}_{(L,k) \in \cW \times \N}$ is as in Proposition \ref{basis:prop}.

Set $\cB^d = \{ \psi_{(\bL,\bk)} \}$. Then $\cB^d = (\cB^1)^{\otimes d}$ is the $d$-fold tensor product of the basis $\cB^1$ for $L^2([0,1])$. We identify the $d$-fold tensor power of $L^2([0,1])$ with $L^2([0,1]^d)$.

\begin{wrapfigure}{l}{0.27\textwidth} % 'r' for right; adjust width as needed
    \centering
    \vspace{-5pt}                     % Adjust vertical spacing if desired
\includegraphics[width=0.25\textwidth]{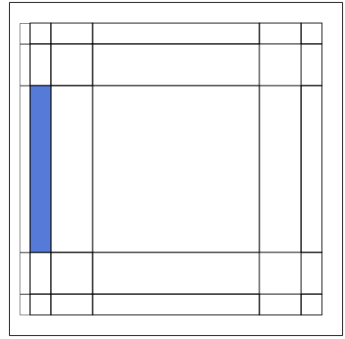}
    \caption{A decomposition of the square into rectangles $\bL$ in dimension $d=2$.}
    \label{fig:thinrect}
    \vspace{-5pt}                     % Adjust as needed to control text flow
\end{wrapfigure}

By properties of tensor products of Hilbert spaces, $\cB^d$ is an orthonormal basis for $L^2([0,1]^d)$. Each $\psi_{(\bL,\bk)}$ is supported on a neighborhood of the box $\bL$, and $\psi_{(\bL,\bk)}$ is the sum of $2^d$ many wave packets. Using the decay condition \eqref{Fourier_decay_CM:eqn}, one can then establish the following: 
\begin{proposition}\label{main:prop}
Let $Q = [0,1]^d$.  Let $S \subset B(0,1)$ be convex and coordinate-wise symmetric. Let $\epsilon \in (0,1/2)$ and $r \geq 1$ be given. The system $\cB^d = \{ \psi_{(\bL,\bk)}\}$ is an orthonormal basis for $L^2(Q)$, and admits a partition into three parts, $\cB^d = \cB^d_{low} \cup \cB^d_{res} \cup \cB^d_{hi}$, determined by $S, r, \epsilon$, satisfying:   
\begin{equation}\label{energy:est}
\sum_{\psi \in \cB^d_{hi}} \| \widehat{ \psi} \|_{L^2(S(r))}^2 + \sum_{\psi \in \cB^d_{low}} \| \widehat{ \psi}\|_{L^2(\R^d \setminus S(r))}^2 
\leq \epsilon^2/4
\end{equation}
with $\# \cB^d_{res} \leq C_d  \max \{ r^{d-1} \log(r/\epsilon)^{5/2}, \log(r/\epsilon)^{(5/2)d} )\}$. Here, $C_d$ is a dimensional constant determined solely by $d$.
\end{proposition}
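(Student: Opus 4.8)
The plan is to classify each basis element $\psi_{(\bL,\bk)}$ according to where its frequency content sits relative to $S(r)$, exploiting the tensorized form of the decay estimate \eqref{Fourier_decay_CM:eqn}. Writing $\rho_i := 1/\delta_{L_i} = 2^{j_i}$ for the frequency spread in coordinate $i$ and $\eta_i := \pi(k_i+1/2)/\delta_{L_i}$ for the center, the product structure gives
\[
|\widehat{\psi_{(\bL,\bk)}}(\xi)| \leq C^d \Big(\prod_i \sqrt{\delta_{L_i}}\Big) \sum_{\sigma \in \{\pm 1\}^d} \prod_i \Psi_a\big((\xi_i - \sigma_i\eta_i)/\rho_i\big),
\]
a sum of $2^d$ anisotropic bumps centered at the reflected points $(\sigma_1\eta_1,\dots,\sigma_d\eta_d)$. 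Since $S$, hence $S(r)$, is coordinate-wise symmetric, each reflected bump sees the same geometry, so it suffices to analyze the bump in the positive orthant and absorb the factor $2^d$ (together with the $2^d$ choices of $\bL$ sharing a fixed side-length vector, and the harmless expansion of the squared $2^d$-term sum) into $C_d$.

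First I would record the two tail estimates. Fix a threshold $W>0$ and let $R_\eta := \prod_i [\eta_i - W\rho_i, \eta_i + W\rho_i]$ be the effective support box. If $R_\eta \subset S(r)$, then every $\xi \in \R^d \setminus S(r)$ has some coordinate with $|\xi_i - \eta_i| > W\rho_i$, and a one-variable integration of $\Psi_a((\xi_i-\eta_i)/\rho_i)^2$ over $|\xi_i - \eta_i| > W\rho_i$ shows $\|\widehat{\psi_{(\bL,\bk)}}\|_{L^2(\R^d\setminus S(r))}^2 \lesssim_d e^{-aW^{2/3}}$; these elements go into $\cB^d_{low}$. For the high part I would \emph{not} use the box alone: a packet is placed in $\cB^d_{hi}$ when its mass inside $S(r)$ is small, and I would bound that mass, using $S(r) \subseteq \prod_i \mathrm{proj}_i(S(r))$, by the product $\prod_i \int_{E_i}\Psi_a(u)^2\,du$ where $E_i := \rho_i^{-1}(\mathrm{proj}_i(S(r)) - \eta_i)$. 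This product bound is what correctly certifies that a high-frequency packet (large $\rho_i$, center far out) has negligible mass in $S(r)$ even though its crude box may still reach the origin -- the gain being the short length $|E_i| \lesssim r/\rho_i$ of the integration window. Taking $W\sim (\log(r/\epsilon))^{3/2}$ (the crude a priori polynomial bound $N \lesssim (r/\epsilon)^{C_d}$ on the number of retained packets suffices to fix $W$ without circularity) makes every retained tail at most $\epsilon^2/(4N)$ and yields \eqref{energy:est} after summation.

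The core of the argument is bounding $\#\cB^d_{res}$, the packets whose effective support genuinely straddles $\partial S(r)$ while carrying non-negligible mass on both sides. For a fixed side-length vector, equivalently a fixed anisotropy $\bj=(j_1,\dots,j_d)$, the centers $\eta$ lie on a product grid of spacing $\pi\rho_i$ in coordinate $i$, so after rescaling $\xi_i \mapsto \xi_i/\rho_i$ the grid becomes $\pi\Z^d$, the boxes become cubes of uniform half-width $W$, and $S(r)$ becomes the convex body $\widetilde K_{\bj} := \mathrm{diag}(\rho_1^{-1},\dots,\rho_d^{-1})\,S(r)$. A residual center must lie within $\ell^\infty$-distance $W$ of $\partial\widetilde K_{\bj}$, so the count at scale $\bj$ is bounded by the number of lattice points in a $W$-collar of $\partial\widetilde K_{\bj}$, which by a Steiner-type estimate for convex bodies is $\lesssim_d \mathrm{SA}(\widetilde K_{\bj})\,W + W^d$. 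I would then sum over $\bj$, using the mass bound of the previous paragraph to truncate: scales with some $\rho_i \gtrsim r/\epsilon^2$ contribute nothing, since those packets already lie in $\cB^d_{hi}$. Splitting the truncated sum into the ``resolved'' scales (where $\widetilde K_{\bj}$ has diameter $\gtrsim W$ and the surface term $\mathrm{SA}(\widetilde K_{\bj})W \sim r^{d-1}\,\mathrm{poly}(\log)$ dominates) and the ``small-body'' scales (where the $W^d$ term dominates) produces the two competing quantities $r^{d-1}\log(r/\epsilon)^{5/2}$ and $\log(r/\epsilon)^{5d/2}$, and taking the maximum gives the claimed bound.

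I expect the genuinely hard step to be this summation over the anisotropic scales $\bj$. Two features make it delicate: the correct truncation of high-frequency packets requires the measure-weighted mass bound rather than the crude box (otherwise the scale sum diverges, as the box of a high-$\rho_i$ packet keeps reaching the slab $\mathrm{proj}_i(S(r))$), and tracking how $\mathrm{SA}(\widetilde K_{\bj})$ degrades under the anisotropic scalings -- using convexity and coordinate-symmetry of $S$ -- is what converts the per-scale collar estimate into the clean $\max\{r^{d-1}\log(r/\epsilon)^{5/2}, \log(r/\epsilon)^{5d/2}\}$ form. Everything else -- Plancherel, the one-variable tail integrals, and assembling \eqref{energy:est} -- is routine once the scale decomposition and the choice of $W$ are fixed.
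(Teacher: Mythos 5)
Your proposal is correct and follows essentially the same route as the paper, whose own proof simply invokes Proposition 5.3 and Lemmas 5.4 and 5.6 of \cite{israelmayeli2023acha} with the per-packet threshold $\delta = c_d \epsilon^2/r^d$ -- precisely your device of fixing $W \sim \log(r/\epsilon)^{3/2}$ against an a priori polynomial count of retained packets. Your anisotropic rescaling of $S(r)$ per dyadic scale vector $\bj$, the measure-weighted (short-window) bound certifying the high-frequency tail, and the boundary-collar lattice count summed over the $O(\log(r/\epsilon))$ scales per coordinate -- producing the competing terms $r^{d-1}\log(r/\epsilon)^{5/2}$ and $\log(r/\epsilon)^{5d/2}$ -- reconstruct the mechanism of the cited argument.
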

For the proof of Proposition \ref{main:prop}, we apply Proposition 5.3 in \cite{israelmayeli2023acha} with $\delta = c_d \epsilon^2/r^d$. 
%for a small dimensional constant $c_d$.
For the proof of \eqref{energy:est}, see the estimates in Lemmas 5.4 and 5.6 of \cite{israelmayeli2023acha}.

\begin{proof}[Proof of Theorem \ref{mainthm:cube_convex}]
The operators $\mathcal{T}_{F,S} = B_S P_F B_S$ and 
  $P_F B_S P_F$ have identical eigenvalues
  (see the proof of Lemma 1 in \cite{Landau67}). The bound follows from Lemma \ref{conc_frame:lem} and Proposition \ref{main:prop}.
\end{proof}

{\bf Open Problem 2:} 
Moving beyond tensor systems, generalize the previous construction to prove an analogue of Proposition \ref{main:prop} in the setting when $Q$ and $S$ are Euclidean balls, i.e., $Q=B(0,1)$, $S(r) = B(0,r)$. That is, we ask to construct a system $\mathcal{B}^d$ that is a frame for $L^2(B(0,1))$ and such that for each $\epsilon > 0$, $\cB^d$ admits a decomposition  $\cB^d = \cB^d_{low} \cup \cB^d_{res} \cup \cB^d_{hi}$, satisfying \eqref{energy:est} with $\# \cB^d_{res} \leq C_d  r^{d-1} \log(r/\epsilon)^{J}$ for some exponent $J$.

By Lemma 3.1 in \cite{marceca2023}, this bound would recover the known spectral estimates on the localization operators $\mathcal{T}_{Q,S(r)}$ \cite{marceca2023,hughes2024eigenvalue}. Furthermore, we expect that the associated frame will have further use in signal processing tasks, such as signal recovery and denoising.

%\textcolor{blue}{
%We expect this frame to have important applications for developing signal manipulation and reconstruction
%methods relating to the processing of restrictions of bandlimited functions to the the unit ball.}

\printbibliography

\end{document}